\newtheorem{proposition}{Proposition}[section]
\newtheorem{corollary}[proposition]{Corollary}
\newtheorem{theorem}[proposition]{Theorem}
\theoremstyle{definition}
\newtheorem{definition}[proposition]{Definition}
\newtheorem{example}[proposition]{Example}
\newtheorem{examples}[proposition]{Examples}
\newtheorem{remark}[proposition]{Remark}
\newcommand{\thlabel}[1]{\label{th:#1}}
\newcommand{\thref}[1]{Theorem~\ref{th:#1}}
\newcommand{\selabel}[1]{\label{se:#1}}
\newcommand{\seref}[1]{Section~\ref{se:#1}}
\newcommand{\prlabel}[1]{\label{pr:#1}}
\newcommand{\prref}[1]{Proposition~\ref{pr:#1}}
\newcommand{\colabel}[1]{\label{co:#1}}
\newcommand{\coref}[1]{Corollary~\ref{co:#1}}
\newcommand{\relabel}[1]{\label{re:#1}}
\newcommand{\reref}[1]{Remark~\ref{re:#1}}
\newcommand{\exlabel}[1]{\label{ex:#1}}
\newcommand{\exref}[1]{Example~\ref{ex:#1}}
\newcommand{\delabel}[1]{\label{de:#1}}
\newcommand{\deref}[1]{Definition~\ref{de:#1}}
\newcommand{\eqlabel}[1]{\label{eq:#1}}
\newcommand{\equref}[1]{(\ref{eq:#1})}
\newcommand{\Cc}{\mathcal{C}}
\def\*C{{}^*\hspace*{-1pt}{\Cc}}
\def\text#1{{\rm {\rm #1}}}
\begin{document}


\title[Classifying complements for groups]
{Classifying complements for groups. Applications}

\author{A. L. Agore}
\address{Faculty of Engineering, Vrije Universiteit Brussel, Pleinlaan 2, B-1050 Brussels,
Belgium}\address{Permanent address: Department of Applied
Mathematics, Bucharest University of Economic Studies, Piata
Romana 6, RO-010374 Bucharest 1, Romania}
\email{ana.agore@vub.ac.be and ana.agore@gmail.com}

\author{G. Militaru}
\address{Faculty of Mathematics and Computer Science, University of Bucharest, Str.
Academiei 14, RO-010014 Bucharest 1, Romania}
\email{gigel.militaru@fmi.unibuc.ro and gigel.militaru@gmail.com}
\subjclass[2010]{16T10, 16T05, 16S40}

\thanks{A.L. Agore is Postdoctoral Fellow of the Fund for Scientific Research Flanders (Belgium) (F.W.O.– Vlaanderen). This work
was supported by a grant of the Romanian National Authority for
Scientific Research, CNCS-UEFISCDI, grant no. 88/05.10.2011.}

\subjclass[2010]{20B05, 20B35, 20D06, 20D40}

\keywords{Matched pairs; bicrossed products; the classification of
finite groups.}

\maketitle

\begin{center}
\emph{Dedicated to Professor Constantin N\u{a}st\u{a}sescu on the
occasion of his 70th birthday}
\end{center}

\begin{abstract} Let $A \leq G$ be a subgroup of a group
$G$. An $A$-complement of $G$ is a subgroup $H$ of $G$ such that
$G = A H$ and $A \cap H = \{ 1\}$. The \emph{classifying
complements problem} asks for the description and classification
of all $A$-complements of $G$. We shall give the answer to this
problem in three steps. Let $H$ be a given $A$-complement of $G$
and $(\triangleright, \triangleleft)$ the canonical left/right
actions associated to the factorization $G = A H$. To start with,
$H$ is deformed to a new $A$-complement of $G$, denoted by $H_r$,
using a certain map $r: H \to A$ called a deformation map of the
matched pair $(A, H, \triangleright, \triangleleft)$. Then the
description of all complements is given: ${\mathbb H}$ is an
$A$-complement of $G$ if and only if ${\mathbb H}$ is isomorphic
to $H_{r}$, for some deformation map $r: H \to A$. Finally, the
classification of complements proves that there exists a bijection
between the isomorphism classes of all $A$-complements of $G$ and
a cohomological object ${\mathcal D} \, (H, A \, | \,
(\triangleright, \triangleleft) )$. As an application we show that
the theoretical formula for computing the number of isomorphism
types of all groups of order $n$ arises only from the
factorization $S_n = S_{n-1} C_n$.
\end{abstract}

\section*{Introduction}

Group factorizations have been intensively studied starting with
the classical papers by Sz\'{e}p \cite{szep1, szep2, szep3},
Douglas \cite{Douglas} and Ito \cite{Ito} but the problem goes
back to Maillet \cite{Maillet} and the 1900 Minkowski conjecture
on tiling (another name for factorizations) proved 40 years later
by Haj\'{o}s \cite{hajos}. Let $A \leq G$ be a subgroup of $G$. An
$A$-complement of $G$ is a subgroup $H \leq G$ such that $G$
factorizes through $A$ and $H$, that is $G = A H$ and $A \cap H =
\{ 1\}$. ${\mathcal F} (A, G)$ will denote the (possibly empty)
set of isomorphism types of all $A$-complements of $G$. We define
the factorization index of $A$ in $G$ to be the cardinal of
${\mathcal F} (A, G)$ and it will be denoted by $[G : A]^f := |\,
{\mathcal F} (A, G) \,|$.

The problem of existence of complements has to be treated ''case
by case'' for every given subgroup $A$ of $G$, a computational
part of it can not be avoided. It was studied in its global form:
\emph{find all factorizations of a given group $G$}. Particular
attention was given to finding all factorizations of simple
groups. Starting with the 1970's a very rich literature on the
subject was developed: see for instance \cite{aradf84, bozo},
\cite{fisman, fismanA, gentchev, Gi, goren}, \cite{LPS3},
\cite{Pr}, \cite{walls, WW}. For more details on this problem we
refer to the two fundamental monographs \cite{LPS1}, \cite{LPS2}
and the references therein. The present paper deals with the
following question:

\emph{\textbf{Classifying complements problem (CCP)}: Let $A$ be a
subgroup of $G$. If an $A$-complement of $G$ exists, describe
explicitly, classify all $A$-complements of $G$ and compute the
factorization index $[G : A]^f$ .}

We shall give the answer to the CCP in three steps called:
deformation of complements, description of complements and
classification of complements. First of all, in \seref{1} we
recall briefly the definition of a matched pair of groups and the
construction of the bicrossed product of two groups as defined by
Takeuchi \cite{Takeuchi}. Let $H$ be a given $A$-complement of $G$
and $(\triangleright, \triangleleft)$ the canonical left/right
actions associated to the factorization $G = A H$ such that $(A,
H, \triangleright, \triangleleft)$ is a matched pair of groups and
$G = A \bowtie H$. \thref{deformatANM} is called the
\emph{deformation of complements}: if $r : H \to A$ is a
deformation map of the matched pair $(A, H, \triangleright,
\triangleleft)$, then the group $H$ is deformed to a new group
$H_r$, called the $r$-deformation of $H$, such that $H_{r}$
remains an $A$-complement of $G = A \bowtie H$. The key point is
\thref{descformelorgr} called the \emph{description of
complements}: ${\mathbb H}$ is an $A$-complement of $G$ if and
only if ${\mathbb H}$ is isomorphic to $H_{r}$, for some
deformation map $r: H \to A$ of the canonical matched pair $(A, H,
\triangleright, \triangleleft)$. Finally, the \emph{classification
of complements} is proven in \thref{clasformelorgr}: there exists
a bijection between the set of isomorphism types of all
$A$-complements of $G$ and a cohomological type object ${\mathcal
D} \, (H, A \, | \, (\triangleright, \triangleleft) )$ which is
explicitly constructed. In particular, the factorization index is
computed by the formula $[G : A]^f = | \, {\mathcal D} (H, A \, |
\, (\triangleright, \triangleleft) ) \, |$. In \seref{fingr} we
provide some explicit examples. Let $S_{n}$ be the symmetric group
and $C_n$ the cyclic group of order $n$. By applying our results
to the factorization $S_n = S_{n-1} C_n$ we obtain the following:
$(1)$ any group $H$ of order $n$ is isomorphic to $(C_n)_r$, the
$r$-deformation of the cyclic group $C_n$ for some deformation map
$r: C_n \to S_{n-1}$ of the canonical matched pair $(S_{n-1}, C_n,
\triangleright, \triangleleft)$ and $(2)$ the number of
isomorphism types of all groups of order $n$ is equal to $| \,
{\mathcal D} (C_n, S_{n-1} \, | \, (\triangleright, \triangleleft)
) \, |$. Therefore, we obtain a combinatorial formula for
computing the number of isomorphism types of all groups of order
$n$ which arises from a minimal set of data: the factorization
$S_n = S_{n-1} C_n$.

The factorization problem as well as the bicrossed product were
introduced and studied in other fields such as topological groups,
local compact groups, Hopf algebras, groups and Lie algebras etc.
The results presented here for groups can be used as a model for
developing similar theories in the fields listed above. For Hopf
algebras and Lie algebras we refer to \cite{ABM} and for
associative algebras to \cite{AAL}.

\section{Preliminaries}\selabel{1}
Let $G$, $G'$ be two groups containing $A$ as a subgroup. We say
that a morphism of groups $\psi : G \to G' $ \emph{stabilizes} $A$
if $\psi (a) = a$, for all $a\in A$. Let $A$ and $H$ be two groups
and $\triangleright  : H \times A \rightarrow A$ and
$\triangleleft : H \times A \rightarrow H$ two maps. The map
$\triangleright $ (resp. $\triangleleft $) is called trivial if
$h\triangleright a = a$ (resp. $h\triangleleft a = h$), for all
$a\in A$ and $h\in H$. A \textit{matched pair} \cite{Takeuchi} of
groups is a quadruple $(A, H, \triangleright, \triangleleft)$,
where $A$ and $H$ are groups, $\triangleright : H \times A
\rightarrow A$ is a left action of the group $H$ on the set $A$,
$\triangleleft : H \times A \rightarrow H$ is a right action of
the group $A$ on the set $H$ satisfying the following
compatibilities for any $a$, $b \in A$, $h$, $g \in H$:
\begin{eqnarray}
h \triangleright (ab) &=& (h\triangleright a) ((h\triangleleft {a}
)\triangleright b) \eqlabel{2} \\
(hg)\triangleleft a &=& (h\triangleleft ({g \triangleright a}))
(g\triangleleft a) \eqlabel{3}
\end{eqnarray}

If $(A, H, \triangleright, \triangleleft)$ is a matched pair then
the following normalizing conditions hold:
\begin{equation}\eqlabel{mp1}
1 \triangleright a = a, \quad h \triangleleft 1 = h, \quad
h\triangleright 1 = 1, \quad 1\triangleleft a = 1
\end{equation}
for all $a \in A$ and $h\in H$. Let $\triangleright : H \times A
\rightarrow A$, $\triangleleft : H \times A \rightarrow H$ be two
maps and $A\bowtie \, H : = A\times H$ with the binary operation
defined by the formula:
\begin{equation}\eqlabel{def4}
(a,\, h)\cdot (b, \, g) : = \bigl( a (h\triangleright b), \, (h
\triangleleft b) g \bigl)
\end{equation}
for all $a$, $b \in A$, $h$, $g \in H$. The following is
\cite[Proposition 2.2.]{Takeuchi}:

\begin{proposition}\prlabel{matched}
Let $A$ and $H$ be groups and $\triangleright : H \times A
\rightarrow A$, $\triangleleft : H \times A \rightarrow H$ two
maps. Then $A\bowtie \, H$  is a group with unit $(1,1)$ if and
only if $(A, H, \triangleright, \triangleleft)$ is a matched pair
of groups. In this case $A\bowtie \, H$ is called the bicrossed
product of $A$ and $H$.
\end{proposition}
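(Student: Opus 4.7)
The plan is to prove both implications by unpacking the group axioms for the multiplication \equref{def4} directly, since the matched pair axioms turn out to be exactly what is needed at each step.

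For the \emph{if} direction, assume $(A, H, \triangleright, \triangleleft)$ is a matched pair. First I would verify that $(1,1)$ is a two-sided unit using the four normalizing relations \equref{mp1}; two of them ($1 \triangleright a = a$ and $h \triangleleft 1 = h$) are simply the identity axioms for $\triangleright$ being a left action and $\triangleleft$ a right action, while the other two ($h \triangleright 1 = 1$ and $1 \triangleleft a = 1$) are extracted by substituting $a = b = 1$ into \equref{2} and $h = g = 1$ into \equref{3} and cancelling. Next I would check associativity by expanding both $\bigl((a,h)(b,g)\bigr)(c,k)$ and $(a,h)\bigl((b,g)(c,k)\bigr)$. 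The first coordinates agree precisely because of compatibility \equref{2} (applied to $h \triangleright (b(g \triangleright c))$) combined with the left-action identity $((h \triangleleft b) g) \triangleright c = (h \triangleleft b) \triangleright (g \triangleright c)$; the second coordinates agree precisely because of \equref{3} (applied to $((h \triangleleft b) g) \triangleleft c$) combined with the right-action identity $h \triangleleft (b (g \triangleright c)) = (h \triangleleft b) \triangleleft (g \triangleright c)$. Finally I would propose the inverse $(a,h)^{-1} := (h^{-1} \triangleright a^{-1},\, h^{-1} \triangleleft a^{-1})$ and verify both inverse equations; each product collapses to $(1,1)$ after one application of \equref{2} or \equref{3} together with the normalizing conditions.

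For the \emph{only if} direction, assume $A \bowtie H$ is a group with unit $(1,1)$. The normalizing relations \equref{mp1} drop out of the unit axioms: the identity $(1,1)(a,1) = (a,1)$ forces $1 \triangleright a = a$ and $1 \triangleleft a = 1$, while $(1,h)(1,1) = (1,h)$ forces $h \triangleright 1 = 1$ and $h \triangleleft 1 = h$. Next, associativity applied to two carefully chosen triples produces the four remaining axioms. Associativity on the triple $(1,h)(1,g)(a,1)$, after simplification using the normalizing relations just obtained, yields in the first coordinate the left-action property $(hg) \triangleright a = h \triangleright (g \triangleright a)$ and in the second coordinate exactly the compatibility \equref{3}. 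Associativity on the triple $(1,h)(a,1)(b,1)$ yields in the first coordinate exactly the compatibility \equref{2} and in the second coordinate the right-action property $h \triangleleft (ab) = (h \triangleleft a) \triangleleft b$.

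The main obstacle is strictly clerical: one must choose the specialised triples so that each one isolates exactly one of the four axioms (two action axioms and two mixed compatibilities) without entanglement, and verify that together they are strong enough to imply full associativity for arbitrary triples. Once the pairing \emph{which triple proves which axiom} has been pinned down, the rest of the verification is just a side-by-side comparison of the two expansions.
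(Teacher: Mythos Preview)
Your argument is correct. Note, however, that the paper does not actually supply its own proof of this proposition: it simply cites \cite[Proposition~2.2]{Takeuchi} and moves on. So there is nothing in the paper to compare your approach against. What you have written is the standard direct verification that one finds in Takeuchi's paper and elsewhere; the choice of the two specialised triples $(1,h)(1,g)(a,1)$ and $(1,h)(a,1)(b,1)$ to isolate the four axioms in the \emph{only if} direction is exactly the canonical one.

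One small comment on your closing paragraph: your worry about whether the two specialised triples ``are strong enough to imply full associativity for arbitrary triples'' is misplaced. In the \emph{only if} direction you are \emph{assuming} full associativity and extracting consequences from particular instances, so there is no circularity or gap. The converse---that the matched pair axioms suffice for full associativity---is what you already handled directly in the \emph{if} direction, where you expanded both $\bigl((a,h)(b,g)\bigr)(c,k)$ and $(a,h)\bigl((b,g)(c,k)\bigr)$ for arbitrary elements. So the two directions together do close the loop, and no further verification is needed.
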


If $A\bowtie \, H$ is a bicrossed product then $i_A : A \to
A\bowtie \, H$, $i_A (a) = (a, 1)$ and $i_H : H \to A\bowtie \,
H$, $i_H (h) = (1, h)$ are morphisms of groups. $A$ and $H$ will
be viewed as subgroups of $A\bowtie \, H$ via the identifications
$A \cong A \times \{1\}$, $H \cong \{1\} \times H$. If the right
action $\triangleleft$ of a matched pair $(A, H, \triangleright,
\triangleleft)$ is the trivial action then the bicrossed product
$A\bowtie \, H$ is just the semidirect product $A\ltimes H$ of $A$
and $H$. Thus, the bicrossed product is a generalization of the
semidirect product to the case when none of the factors is
required to be normal.

We recall that a group $G$ \textit{factorizes} through two
subgroups $A$ and $H$ if $G = AH$ and $A \cap H = \{1\}$. The
bicrossed product $A\bowtie \, H$ factorizes through $A \cong A
\times \{1\}$ and $H \cong \{1\} \times H$ as for any $a\in A$ and
$h\in H$ we have that $ (a, h) = (a , 1) \cdot (1, h)$.
Conversely, the main motivation for defining the  bicrossed
product of groups is the following:

\begin{proposition}\prlabel{imp}
A group $G$ factorizes through two subgroups $A$ and $H$ if and
only if there exists a matched pair of groups $(A, H,
\triangleright, \triangleleft)$ such that the multiplication map
$$
m_{G}: A \bowtie H \to G, \qquad m_{G}(a, \, h) = ah
$$
for all $a\in A$ and $h\in H$ is an isomorphism of groups that
stabilizes $A$.
\end{proposition}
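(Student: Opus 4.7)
The plan is to prove both implications. The ``if'' direction follows almost at once from what was observed just before the statement: if $(A,H,\triangleright,\triangleleft)$ is a matched pair and $m_G$ is an isomorphism stabilizing $A$, then $A\bowtie H$ factorizes as $(A\times\{1\})\cdot(\{1\}\times H)$ with trivial intersection, so transporting this factorization via $m_G$ yields $G=AH$ with $A\cap H=\{1\}$.

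For the ``only if'' direction, suppose $G=AH$ with $A\cap H=\{1\}$. I would begin by establishing uniqueness of the decomposition: if $ah=a'h'$, then $(a')^{-1}a=h'h^{-1}\in A\cap H=\{1\}$, forcing $a=a'$ and $h=h'$. Using this uniqueness I can define, for each $(h,a)\in H\times A$, the unique elements $h\triangleright a\in A$ and $h\triangleleft a\in H$ characterized by
\[
ha=(h\triangleright a)(h\triangleleft a).
\]

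Next I would extract all four matched pair axioms from two associativity identities in $G$ combined with the uniqueness of factorization. Rewriting both sides of $(hg)a=h(ga)$ in factored form and comparing gives simultaneously the left action axiom $(hg)\triangleright a=h\triangleright(g\triangleright a)$ and the compatibility \equref{3}; symmetrically, $h(ab)=(ha)b$ yields the right action axiom $h\triangleleft(ab)=(h\triangleleft a)\triangleleft b$ and the compatibility \equref{2}. The normalizations $1\triangleright a=a$ and $h\triangleleft 1=h$ come from $1\cdot a=a$ and $h\cdot 1=h$. By \prref{matched}, $A\bowtie H$ is then a group.

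Finally, I would verify that $m_G$ is a group isomorphism stabilizing $A$. Stabilization is immediate from $m_G(a,1)=a$. The homomorphism property reads
\[
m_G\bigl((a,h)(b,g)\bigr)=a(h\triangleright b)(h\triangleleft b)g=a(hb)g=m_G(a,h)\,m_G(b,g),
\]
where the second equality reuses the defining identity of $\triangleright$ and $\triangleleft$. Surjectivity is exactly $G=AH$ and injectivity is the uniqueness of factorization already established. The main obstacle is keeping the two associativity calculations bookkept cleanly: once uniqueness of the $AH$-decomposition is in place, all four matched pair axioms and both compatibilities drop out in parallel from the same two identities, but one must be careful to position $\triangleright$ versus $\triangleleft$ correctly in the nested decompositions.
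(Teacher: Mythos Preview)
Your proof is correct and follows exactly the construction the paper indicates (defining $\triangleright,\triangleleft$ via the unique factorization $ha=(h\triangleright a)(h\triangleleft a)$ and reading off the matched pair axioms from associativity in $G$); the paper simply cites Takeuchi for the details you have written out in full. Nothing further is needed.
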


\begin{proof} The detailed proof is given in
\cite[Proposition 2.4]{Takeuchi}. We only indicate the
construction of the matched pair $(A, H, \triangleright,
\triangleleft)$ associated to the factorization $G = A H$. Indeed,
if $G$ factorizes through $A$ and $H$ then for any $g \in G$ there
exists a unique pair $(a, h) \in A \times H$ such that $g = ah$.
This allows us to attach to any $(a, h) \in A \times H$ a unique
pair of elements $(h \triangleright a,\, h \triangleleft a) \in A
\times H$ such that
\begin{equation}\eqlabel{constr}
h\, a =  (h \triangleright a ) (h \triangleleft a) \in A H
\end{equation}
Then $(A, H, \triangleright, \triangleleft)$ is a matched pair of
groups and $m_{G}: A \bowtie H \to G$ is an isomorphism of groups
that stabilizes $A$.
\end{proof}

\begin{remark} \relabel{neunic}
Let $A \leq G$ be a given subgroup of $G$. We will see that a
factorization $G = A H$ is not necessarily unique as there may
exist other subgroups $H'\leq G$, not isomorphic to $H$, such that
$G = A H'$. Such an example is presented below. Let $k$ be a
positive integer. In what follows we view $A_{4k-1}$ as a subgroup
of $A_{4k}$ by letting $4k$ to be a fixed point in the alternating
group $A_{4k}$. Then we have two factorizations: $A_{4k} =
A_{4k-1} D_{4k} = A_{4k-1} (C_2 \times C_{2k})$, where $D_{4k}$ is
the dihedral group and $C_m$ is the cyclic group of order $m$.
Indeed, let $\sigma$, $\tau \in A_{4k}$ be the even permutations
\begin{eqnarray*}
\sigma &=& (1, 3, 5, \cdots , 4k-1)(2, 4, 6, \cdots, 4k )\\
\tau &=& (1, 2k+2) (2, 2k+1) (3, 2k+4) (4, 2k+3) \cdots (2k-1, 4k)
(2k, 4k-1)
\end{eqnarray*}
It is straightforward to check that $\sigma$ and $\tau$ generate a
subgroup of $A_{4k}$ isomorphic to the dihedral group $D_{4k}$ of
order $4k$ and $A_{4k} = A_{4k-1} D_{4k}$. On the other hand, let
$\sigma'$, $\tau' \in A_{4k}$ given by
$$
\sigma' = (1, 2, \cdots, 2k) (2k+1, 2k+2, \cdots, 4k), \quad \tau'
= (1, 2k+1) (2, 2k+2) \cdots (2k, 4k)
$$
Then $\sigma' \tau' = \tau' \sigma'$ and the subgroup of $A_{4k}$
generated by $\sigma$ and $\tau$ is $C_2 \times C_{2k}$. Moreover,
we have $A_{4k} = A_{4k-1} (C_2 \times C_{2k})$. This example
reveals yet another important fact: a possible attempt to
generalize the Krull-Schmidt decomposition of groups into direct
products (\cite[Theorem 6.36]{rotman}) fails for bicrossed
products since $A_{4k} = A_{4k-1} \bowtie D_{4k} \cong A_{4k-1}
\bowtie (C_2 \times C_{2k})$, and of course the direct product
$C_2 \times C_{2k}$ is not isomorphic to the dihedral group
$D_{4k}$.
\end{remark}

From now on, the matched pair constructed in \equref{constr} will
be called the \emph{canonical matched pair} associated to the
factorization $G = A H$. We use the above terminology in order to
distinguish this matched pair among other possible matched pairs
$(A, H, \triangleright', \triangleleft')$ such that $A \bowtie' H
\cong G$ (isomorphism of groups that stabilizes $A$), where $A
\bowtie' H$ is the bicrossed product associated to the matched
pair $(A, H, \triangleright', \triangleleft')$. The following
result provides more details: it can be obtained from
\cite[Proposition 2.1]{CENT} for $\sigma = Id_{H}$. However, we
state the result below for the sake of completeness as it will be
used in the sequel.

\begin{proposition}\prlabel{1}
Let $(A, H, \triangleright, \triangleleft)$ and $(A, H',
\triangleright', \triangleleft')$ be two matched pairs of groups.
There exists a bijection between the set of all morphisms of
groups $\psi: A \bowtie' H' \rightarrow A \bowtie H$ that
stabilize $A$ and the set of all pairs $(r, v)$, where $r: H'
\rightarrow A$, $v: H' \rightarrow H$ are two unit preserving maps
satisfying the following compatibilities for any $h'$, $g' \in
H'$, $a \in A$:
\begin{eqnarray}
h' \triangleright' a &{=}& r(h') \, \bigl(v(h')
\triangleright a \bigl) \, r (h' \triangleleft' a)^{-1}  \eqlabel{3ab}\\
v(h' \triangleleft' a) &{=}& v(h') \triangleleft a
\eqlabel{4ab}\\
r(h' g' ) &{=}& r(h') \,
\bigl(v(h') \triangleright r(g')\bigl)\eqlabel{1ab}\\
v(h' g') &{=}& \bigl(v(h') \triangleleft r(g')\bigl) \,
v(g')\eqlabel{2ab}
\end{eqnarray}
Under the above correspondence the morphism of groups $\psi: A
\bowtie' H' \rightarrow A \bowtie H$ corresponding to $(r, v)$ is
given by:
\begin{equation}\eqlabel{p5}
\psi(a, \, h') = \bigl(a \, r(h'), \, v(h')\bigl)
\end{equation}
for all $a \in A$, $h' \in H'$ and $\psi : A \bowtie' H'
\rightarrow A \bowtie H$ is an isomorphism of groups if and only
if the map $v: H' \to H$ is bijective.
\end{proposition}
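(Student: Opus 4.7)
The plan is to first extract the pair $(r, v)$ from a given morphism $\psi : A \bowtie' H' \to A \bowtie H$ that stabilizes $A$. Since $\psi(a, 1) = (a, 1)$ by hypothesis and $(a, h') = (a, 1) \cdot (1, h')$ holds in $A \bowtie' H'$ by the normalizing conditions \equref{mp1}, the morphism $\psi$ is determined by its values on elements of the form $(1, h')$. Writing $\psi(1, h') = (r(h'), v(h'))$ defines maps $r : H' \to A$ and $v : H' \to H$, both unit-preserving because $\psi$ is. A short computation then yields $\psi(a, h') = (a, 1) \cdot (r(h'), v(h')) = (a\, r(h'), v(h'))$, recovering the formula \equref{p5}.

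Next I would translate the multiplicativity of $\psi$ into the four compatibilities. Since every element of $A \bowtie' H'$ factors as $(a, 1)(1, h')$, it suffices to verify $\psi$-multiplicativity on products of the form $(1, h')(a, 1)$ and $(1, h')(1, g')$. Expanding $(1, h')(a, 1) = (h' \triangleright' a,\, h' \triangleleft' a)$ via \equref{def4}, applying $\psi$, and matching with $\psi(1, h')\psi(a, 1) = (r(h'), v(h'))(a, 1)$ coordinate-by-coordinate produces \equref{3ab} from the $A$-coordinate and \equref{4ab} from the $H$-coordinate. Similarly, $(1, h')(1, g') = (1, h' g')$ by normalization, and expanding $\psi(1, h')\psi(1, g') = (r(h'), v(h'))(r(g'), v(g'))$ in $A \bowtie H$ produces \equref{1ab} in the $A$-coordinate and \equref{2ab} in the $H$-coordinate. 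Conversely, given any pair $(r, v)$ satisfying \equref{3ab}--\equref{2ab}, running the same computations backwards shows that the formula \equref{p5} defines a group morphism that stabilizes $A$.

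Finally, I would settle the isomorphism criterion. Since $\psi(r(h')^{-1}, h') = (1, v(h'))$, if $\psi$ is bijective then $v$ is both injective and surjective. Conversely, assume $v$ is bijective; then the set-theoretic map $(b, g) \mapsto \bigl(b\, r(v^{-1}(g))^{-1},\, v^{-1}(g)\bigr)$ is a two-sided inverse of $\psi$ by a direct substitution into \equref{p5}, and since $\psi$ is already a group morphism, this makes $\psi$ an isomorphism.

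The main obstacle is the careful bookkeeping behind \equref{3ab}: the $A$-coordinate of $\psi\bigl((1,h')(a,1)\bigr)$ reads $(h' \triangleright' a)\, r(h' \triangleleft' a)$, while the $A$-coordinate of $\psi(1,h')\psi(a,1)$ reads $r(h')\bigl(v(h') \triangleright a\bigr)$; isolating $h' \triangleright' a$ requires right-multiplying by $r(h' \triangleleft' a)^{-1}$. All four identities are routine but one must be vigilant throughout about which matched pair — the primed or the unprimed one — is acting in each term.
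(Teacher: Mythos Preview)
Your proof is correct and follows the natural direct route. Note, however, that the paper itself does not supply a proof of this proposition: the text preceding it says only that the result can be obtained from \cite[Proposition 2.1]{CENT} by taking $\sigma = \Id_{H}$, and the statement is recorded without argument for later use. There is therefore nothing in the paper to compare your proof against, but your argument is precisely the expected one --- extract $(r,v)$ from $\psi(1,h')$, use the factorization $(a,h')=(a,1)(1,h')$ to reduce multiplicativity to the two generator products $(1,h')(a,1)$ and $(1,h')(1,g')$, and read off \equref{3ab}--\equref{2ab} coordinatewise. The isomorphism criterion via bijectivity of $v$ is also handled correctly.
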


\section{Classifying complements}\selabel{bdt}
This section contains the main results of the paper. First we need
to introduce the following:

\begin{definition}\delabel{bicrformsb}
Let $A \leq G$ be a subgroup of $G$. An \emph{$A$-complement of
$G$} is a subgroup $H \leq G$ such that $G$ factorizes through $A$
and $H$. We denote by ${\mathcal F} (A, G)$ the set of isomorphism
types of all $A$-complements of $G$. We define the
\emph{factorization index} of $A$ in $G$ as the cardinal of
${\mathcal F} (A, G)$ and it will be denoted by $[G : A]^f := |\,
{\mathcal F} (A, G) \,|$. We shall write $[G : A]^f = 0$, if
${\mathcal F} (A, G)$ is empty.
\end{definition}

Let $H$ be a given $A$-complement of $G$ and $(A, H,
\triangleright, \triangleleft)$ the canonical matched pair
associated to it as in \equref{constr} of \prref{imp}. We shall
describe all $A$-complements of $G$ in terms of $(H,
\triangleleft, \triangleright)$ and certain maps $r : H \to A$,
called deformation maps. The classification of all $A$-complements
of $G$ is also given by proving that ${\mathcal F} (A, G)$ is in
bijection with a cohomological object.

\begin{examples} \exlabel{exidex}
1. Many group extensions $A \leq G$ have the factorization index
$[G : A]^f$ equal to $0$ (that is there exists no factorization $G
= A H$) or $1$. For instance, if $G$ is an abelian group, then $[G
: A]^f \in \{0, 1\}$, for any subgroup $A$ of $G$ ($[G : A]^f = 1$
if and only if $A$ is a direct summand of $G$).

Group extensions $A \leq G$ of factorization index $1$ are exactly
those for which the factorization is unique. In other words, for
these extensions the Krull-Schmidt theorem \cite[Theorem
6.36]{rotman} for bicrossed products holds: if $G\cong A \bowtie H
\cong A \bowtie H'$, then $H \cong H'$. A generic example of an
extension of factorization index $1$ is provided in \coref{proddi}
below: if $A \ltimes H$ is an arbitrary semidirect product of $A$
and $H$, then $[A\ltimes H : A]^f = 1$.

2. Examples of extensions $A \leq G$ for which $[G : A]^f \geq 2$
are quite rare, which makes them tempting to identify.
\reref{neunic} proves in fact that $[A_{4k}: A_{4k-1}]^f \geq 2$.
We provide below an example of an extension of factorization index
$2$.

The extension $S_{3} \leq S_{4}$ has factorization index $2$.
Indeed, let $C_4 = <(1234)> $ be the cyclic group of order $4$ and
$ C_2\times C_2$ the Klein's group viewed as a subgroup of $S_4$
being generated by $(12) (34)$ and $(13)(24)$. Then $S_4$ has two
factorizations: $S_4 = S_3 C_4 = S_3 (C_2 \times C_2)$. Since
there are no other groups of order four we obtain that $[S_4 :
S_3]^f = 2$.

3. Example $(2)$ above can be generalized as follows: the
factorization index $[S_n : S_{n-1}]^f = g(n)$, the number of
isomorphism types of groups of order $n$. Indeed, let $H$ be a
group of order $n$. We see $H$ as a subgroup of $S_{n}$ through
the regular representation, i.e. $T: H \to S_{n}$ given by $T(h) =
\sigma_{h}$, where $\sigma_{h}(x) = hx$, for all $h$, $x \in H$.
It is now obvious that through this representation $n$ is not
fixed by any other element in $H$ besides $1$. Since we consider
$S_{n-1}$ as a subgroup in $S_{n}$ by letting $n$ to be a fixed
point we have $H \cap S_{n-1} = 1$ and therefore $S_{n} =
S_{n-1}H$.
\end{examples}

\begin{definition}\delabel{descmap}
Let $(A, H, \triangleright, \triangleleft)$ be a matched pair of
groups. A \emph{deformation map} of the matched pair $(A, H,
\triangleright, \triangleleft)$ is a function $r : H \to A$ such
that $r (1) = 1$ and for all $g$, $h\in H$ we have:
\begin{equation}\eqlabel{compdef}
r\bigl( \bigl( h \triangleleft r(g)\bigl) \, g \, \bigl) = r(h) \,
\bigl( h \triangleright r(g)\bigl)
\end{equation}
\end{definition}

Let ${\mathcal D} {\mathcal M} \,(H, A \, | \, (\triangleright,
\triangleleft) )$ be the set of all deformation maps of the
matched pair $(A, H, \triangleright, \triangleleft)$. The trivial
map $H \to A$, $h \mapsto 1$, for any $h\in H$ is a deformation
map. If both actions $(\triangleright, \triangleleft)$ of the
matched pair are trivial then a deformation map is just a morphism
of groups $r: H \to A$. The following result is called the
deformation of complements: it shows that any $A$-complement can
be deformed to a new $A$-complement using a deformation map $r : H
\to A$.

\begin{theorem}\thlabel{deformatANM}
Let $(A, H, \triangleright, \triangleleft)$ be a matched pair of
groups and $r : H \to A$ a deformation map. The following hold:

$(1)$ Let $H_{r} := H$, as a set, with the new multiplication
$\bullet$ on $H$ defined for any $h$, $g\in H$ as follows:
\begin{equation}\eqlabel{defoinmult}
h \, \bullet \, g  := \bigl( h \triangleleft r(g)\bigl) \, g
\end{equation}
Then $(H_{r}, \bullet)$ is a group called the $r$-deformation of
$H$.

$(2)$ The map
\begin{equation} \eqlabel{3inv}
\triangleright^r : H_r \times A \to A, \quad   h \triangleright^r
\, a := r(h) \, \bigl( h \triangleright a \bigl) \, r(h
\triangleleft a)^{-1}
\end{equation}
for all $h\in H_r$, $a\in A$ is a left action of the group $H_r$
on the set $A$ and $(A, H_{r}, \, \triangleright^r,
\triangleleft)$ is a matched pair of groups. Furthermore, the map
\begin{equation}\eqlabel{psi2b}
\psi : A \bowtie^r H_{r} \to A \bowtie H, \quad \psi(a, \,h) = (a
\, r(h), \, h)
\end{equation}
for all $a\in A$ and $h\in H$ is an isomorphism of groups, where
$A \bowtie^r H_{r}$ is the bicrossed product associated to the
matched pair $(A, H_{r}, \, \triangleright^r, \triangleleft)$.

$(3)$ $H_{r}$ is an $A$-complement of $A \bowtie H$.
\end{theorem}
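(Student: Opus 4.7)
The plan is to prove all three parts simultaneously by transporting the group structure of $A \bowtie H$ across a single explicit bijection, reducing every verification to one focused calculation. Define $\psi : A \times H \to A \bowtie H$ by $\psi(a,h) = (a\,r(h), h)$; the map $(x,h) \mapsto (x\,r(h)^{-1}, h)$ is its set-theoretic inverse, so $\psi$ is a bijection. Pull the product back by setting $(a,h) \star (b,g) := \psi^{-1}\bigl(\psi(a,h)\cdot \psi(b,g)\bigr)$. Then $(A \times H, \star)$ is automatically a group with unit $(1,1)$, isomorphic to $A \bowtie H$ via $\psi$, and $\psi$ tautologically stabilizes $A$.

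The heart of the argument is to identify $\star$ with a bicrossed-product multiplication, namely
\begin{equation}\eqlabel{starmult}
(a,h) \star (b,g) \;=\; \bigl(a \, (h \triangleright^r b),\; (h \triangleleft b) \bullet g\bigr).
\end{equation}
Expanding $\psi(a,h)\cdot\psi(b,g)$ via the multiplication \equref{def4} in $A \bowtie H$, applying the left compatibility \equref{2} to $h \triangleright (b\,r(g))$, and using that $\triangleleft$ is a right action of $A$ to write $h \triangleleft (b\,r(g)) = (h \triangleleft b) \triangleleft r(g)$, the second coordinate of the product collapses to $\bigl((h \triangleleft b) \triangleleft r(g)\bigr)g = (h \triangleleft b) \bullet g$ by the definition \equref{defoinmult} of $\bullet$. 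Setting $h' := h \triangleleft b$, the application of $\psi^{-1}$ introduces a factor $r(h' \bullet g)^{-1}$; the deformation map identity \equref{compdef} rewrites this as $\bigl(h' \triangleright r(g)\bigr)^{-1} r(h')^{-1}$, which cancels precisely the $(h' \triangleright r(g))$ term produced by the matched-pair expansion. What remains is $a \cdot r(h)\,(h \triangleright b)\, r(h \triangleleft b)^{-1} = a\,(h \triangleright^r b)$. This is the single place where \equref{compdef} is used, and it is the content-carrying step of the proof.

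With \equref{starmult} in hand, parts (1) and (2) follow together. The normalizing identities $h \triangleright^r 1 = 1$, $h \triangleleft 1 = h$, $1 \triangleright^r a = a$, and $1 \triangleleft a = 1$ (each a direct consequence of $r(1)=1$ and \equref{mp1}) show that $\{1\} \times H$ is closed under $\star$ with $(1,h) \star (1,g) = (1, h \bullet g)$, and moreover that the inverse of $(1,h)$ in $(A \times H, \star)$ must have first coordinate equal to $1$ (forced by \equref{starmult} together with the existence of inverses). Hence $\{1\} \times H$ is a subgroup of $(A \times H, \star)$ whose induced operation is exactly $\bullet$, proving that $(H_r, \bullet)$ is a group. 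Since $A$ and $H_r$ are now both groups and \equref{starmult} is literally the multiplication \equref{def4} for the data $(A, H_r, \triangleright^r, \triangleleft)$, \prref{matched} forces $(A, H_r, \triangleright^r, \triangleleft)$ to be a matched pair of groups with $A \bowtie^r H_r = (A \times H, \star)$, and by construction $\psi$ coincides with the isomorphism \equref{psi2b} announced in the theorem.

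For part (3), the subgroup $\{1\} \times H_r$ of $A \bowtie^r H_r$ is carried by $\psi$ onto $\{(r(h), h) : h \in H\} \subset A \bowtie H$; this image meets $A \cong A \times \{1\}$ only in $(1,1)$ and together with $A$ surjects onto $A \bowtie H$, so it is an $A$-complement isomorphic to $H_r$. The main obstacle throughout is the cancellation performed in the second paragraph: a direct attack on associativity of $\bullet$, the action axiom for $\triangleright^r$, and the matched-pair compatibilities \equref{2}--\equref{3} for $(A, H_r, \triangleright^r, \triangleleft)$ would each hide a separate invocation of \equref{compdef}, whereas the transport strategy bundles all such uses into one clean computation.
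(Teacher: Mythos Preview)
Your proof is correct and its core---the transport-of-structure calculation establishing \equref{starmult}---is exactly the paper's computation for Part~(2). The one organizational difference is that the paper first proves Part~(1) by a direct verification of associativity of~$\bullet$ (a separate invocation of \equref{compdef}) together with the explicit inverse formula $h^{-1} \triangleleft r(h)^{-1}$, and only afterwards performs the transport; you instead extract Part~(1) from the transported structure by observing that $\{1\}\times H$ is a $\star$-subgroup, which is cleaner and, as you note, collapses what would otherwise be two uses of \equref{compdef} into one. Part~(3) is handled identically in both arguments.
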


\begin{proof} $(1)$ Using the normalizing conditions \equref{mp1}
and the fact that $r: H \to A$ is a unitary map, $1$ remains the
unit for the new multiplication $\bullet$ given by
\equref{defoinmult}. On the other hand for any $h$, $g$, $t \in H$
we have:
\begin{eqnarray*}
(h \, \bullet \, g) \, \bullet \, t &{=}& \bigl[\bigl(h
\triangleleft r(g)\bigl) g \bigl] \, \bullet \, t =
\Bigl(\underline{\bigl((h \triangleleft r(g)) g
\bigl) \triangleleft \, r(t)} \Bigl) \, t \\
&\stackrel{\equref{3}} {=}& \Bigl( \underline{\bigl( h
\triangleleft r(g)\bigl) \triangleleft \bigl( g \triangleright
r(t)\bigl)}\Bigl)
\bigl( g \triangleleft r(t)\bigl) \, t \\
&{=}& \Bigl( h \triangleleft \bigl( \underline{r(g) ( g
\triangleright r(t)})\bigl)\Bigl) \bigl( g
\triangleleft r(t)\bigl) \, t \\
&\stackrel{\equref{compdef}} {=}& \Bigl( h \triangleleft r \bigl((
g \triangleleft r(t)) \,t \bigl)\Bigl) \, \bigl(g \triangleleft
r(t)\bigl) \, t \\
&{=}& h \, \bullet \bigl[\bigl( g \triangleleft r(t)\bigl) \, t
\bigl] = h \, \bullet \, (g \, \bullet \, t)
\end{eqnarray*}
Thus, the multiplication $\bullet$ is associative and has $1$ as a
unit. We prove now that the inverse of an element $h \in H_{r}$ is
given by $ h^{-1} = h^{-1} \triangleleft r(h)^{-1}$, for all $h
\in H$. Indeed, for any $h\in H$ we have:
\begin{eqnarray*}
h^{-1} \bullet h &{=}& \bigl(h^{-1} \triangleleft r(h)^{-1}\bigl)
\, \bullet \, h = \Bigl(\bigl(h^{-1} \triangleleft r(h)^{-1}\bigl)
\triangleleft \, r(h)\Bigl) \, h \\
&{=}& \Bigl(h^{-1} \triangleleft \bigl(r(h)^{-1}r(h)\bigl)\Bigl)\,
h = h^{-1} \, h  = 1
\end{eqnarray*}
Thus we proved that $(H_{r}, \bullet)$ is a monoid in which every
element has a left inverse. Hence $(H_{r}, \bullet)$ is a group.

$(2)$ Instead of using a rather long computation to prove that
$(A, H_{r}, \, \triangleright^r, \triangleleft)$ satisfies the
axioms \equref{2}-\equref{3} of a matched pair we proceed as
follows: first, observe that the map $\psi : A \times H_{r} \to A
\bowtie H$, $\psi (a, \, h) = (a \, r(h), \, h)$ is a bijection
between the set $A \times H_{r}$ and the group $A \bowtie H$ with
the inverse given by
$$
\psi^{-1} : A \bowtie H \to A \times H_{r}, \quad \psi^{-1} (a, \,
h) = (a \,  r(h)^{-1}, \, h)
$$
for all $a\in A$ and $h\in H$. Thus, there exists a unique group
structure $\diamond$ on the set $A \times H_{r}$ such that $\psi$
becomes an isomorphism of groups and this unique group structure
$\diamond$ is obtained by transferring the group structure from
the group $A \bowtie H$ via the bijection of sets $\psi$, i.e. is
given by:
$$
(a, \, h) \diamond (b, \, g) := \psi^{-1} \bigl(\psi(a,\, h) \,
\cdot \,  \psi(b,\, g)\bigl)
$$
for all $a$, $b \in A$ and $h$, $g \in H_r = H$. If we prove that
this group structure $\diamond$ on the direct product of sets $A
\times H_{r}$ is exactly the one given by \equref{def4} associated
to the pair of maps $(\triangleright^r, \triangleleft)$ the proof
is finished by using \prref{matched}. Indeed, for any $a$, $b\in
A$ and $g$, $h\in H$ we have:
\begin{eqnarray*}
(a, \, h) \diamond (b, \, g) &{=}& \psi^{-1} \bigl(\psi(a, \, h)
\, \cdot \, \psi(b, \, g)\bigl) = \psi^{-1} \Bigl(\bigl( a \,
r(h), \, h \bigl) \, \cdot \, \bigl( b \, r(g), \, g \bigl)\Bigl) \\
&{=}& \psi^{-1} \Bigl( a \, r(h) \bigl( h \triangleright b r(g)
\bigl), \,
\bigl( h \triangleleft b r(g)\bigl) \, g \Bigl)\\
&{=}& \Bigl(a \, r(h) \bigl( h \triangleright b r(g) \bigl) r
\Bigl( \bigl( h \triangleleft b r(g) \bigl) \, g \Bigl)^{-1}, \,
\bigl( h \triangleleft b r(g)\bigl) \, g\Bigl) \\
&{=}& \Bigl(a \,
r(h) \bigl( h \triangleright b r(g)\bigl) \underline{r \Bigl(
\bigl(( h \triangleleft b) \triangleleft r(g) \bigl) \, g
\Bigl)^{-1}}, \,
\bigl( h \triangleleft b r(g) \bigl) \, g\Bigl)\\
&\stackrel{\equref{compdef}} {=}& \Bigl(a \, r(h)
\bigl(\underline{ h \triangleright b r(g)} \bigl)\Bigl[ r \bigl( h
\triangleleft b\bigl) \Bigl( \bigl( h \triangleleft b \bigl)
\triangleright r(g)\Bigl) \Bigl]^{-1}, \, \bigl( h \triangleleft b
r(g)\bigl) \, g \Bigl)\\
&\stackrel{\equref{2}} {=}& \Bigl(a \,
r(h) \bigl( h \triangleright b \bigl) \underline{\Bigl( \bigl( h
\triangleleft b\bigl) \triangleright r(g) \Bigl) \Bigl(\bigl( h
\triangleleft b \bigl)
\triangleright r(g) \Bigl)^{-1}}r \bigl( h \triangleleft b \bigl)^{-1},\\
&&\bigl( h \triangleleft b r(g)\bigl) \, g\Bigl) \\
&{=}& \Bigl(a \, r(h) \bigl( h \triangleright b \bigl)r \bigl( h
\triangleleft b \bigl)^{-1},\, \underline{\bigl( h \triangleleft b
r(g)\bigl) \, g}\Bigl)\\
&\stackrel{\equref{defoinmult}}{=}& \Bigl(\underline{a \, r(h)
\bigl( h \triangleright b \bigl)r \bigl( h \triangleleft b
\bigl)^{-1}},\,\bigl( h \triangleleft b
\bigl) \bullet g\Bigl)\\
&\stackrel{\equref{3inv}}{=}& \Bigl(a \, (h \triangleright^r b),
\, (h \triangleleft b) \bullet g \bigl) = (a, \, h) \cdot^r (b, \,
g)
\end{eqnarray*}
where $\cdot^r$ is the multiplication given by \equref{def4}
associated to the new pair of maps $(\triangleright^r,
\triangleleft)$. Now we apply \prref{matched}.

$(3)$ First we remark that the isomorphism of groups $\psi : A
\bowtie^r H_{r} \to A \bowtie H$ given by \equref{psi2b}
stabilizes $A$. Hence $A \cong \psi (A) = A \times \{1\} \leq A
\bowtie H$ and $H_r \cong \psi( \{1\} \times H_r) = \{ (r(h), h)
\, |\,  h\in H \}$ is a subgroup of $A \bowtie H$. Now, $A \bowtie
H$ factorizes through $A$ and $H_r$ since in $A \bowtie H$ we
have:
$$
(a, h) = (a r(h)^{-1}, 1) \cdot (r(h), h)
$$
for all $a\in A$ and $h\in H$. Of course, $A \times \{1\}$ and $\{
(r(h), h) \, |\,  h\in H \} \cong H_r$ have trivial intersection
in $A \bowtie H$ as $r$ is a unitary map. The proof is now
completely finished.
\end{proof}

Now we prove the converse of \thref{deformatANM} which gives the
description of all $A$-complements of $G$ in terms of a
fixed one $H$.

\begin{theorem}\thlabel{descformelorgr}
Let $A \leq G$ be a subgroup of $G$ and $H$ a given
$A$-complement of $G$. Then ${\mathbb H}$ is an $A$-complement of
$G$ if and only if there exists an isomorphism of groups $
{\mathbb H} \cong H_{r}$, for some deformation map $r : H \to A$ of
the canonical matched pair $(A, H, \triangleright, \triangleleft)$
associated to the factorization $G = A H$.
\end{theorem}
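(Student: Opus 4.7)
One direction is already done: if $r : H \to A$ is a deformation map of the canonical matched pair $(A,H,\triangleright,\triangleleft)$, then \thref{deformatANM}$(3)$ produces the $A$-complement $H_r$ of $A\bowtie H$, which under the isomorphism $m_G : A\bowtie H \to G$ of \prref{imp} becomes an $A$-complement of $G$ isomorphic to $H_r$. So the entire content is in the converse: given another $A$-complement $\mathbb{H}$ of $G$, produce a deformation map $r : H \to A$ and an isomorphism $\mathbb{H} \cong H_r$.

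The plan is to funnel the data through \prref{1}. Applying \prref{imp} to the factorization $G = A\mathbb{H}$ yields a canonical matched pair $(A,\mathbb{H},\triangleright',\triangleleft')$ and an isomorphism $m_G' : A \bowtie' \mathbb{H} \to G$ stabilizing $A$. Composing with $m_G^{-1}$ gives an isomorphism $\psi := m_G^{-1}\circ m_G' : A\bowtie' \mathbb{H} \to A\bowtie H$ that still stabilizes $A$. By \prref{1} this $\psi$ corresponds to a unique pair $(\rho, v)$ of unit-preserving maps $\rho: \mathbb{H}\to A$, $v: \mathbb{H}\to H$ satisfying \equref{3ab}--\equref{2ab}, with $v$ bijective because $\psi$ is an isomorphism.

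Now define $r : H \to A$ by $r := \rho \circ v^{-1}$. The key step is to verify that $r$ is a deformation map of $(A,H,\triangleright,\triangleleft)$. Given $h, g\in H$ put $h' := v^{-1}(h)$, $g' := v^{-1}(g)$. Compatibility \equref{2ab} rewrites as
\[
v(h'g') \;=\; \bigl(v(h')\triangleleft \rho(g')\bigl)\, v(g') \;=\; \bigl(h \triangleleft r(g)\bigl)\,g,
\]
which is precisely the multiplication $h\bullet g$ defined by \equref{defoinmult}. Plugging this identification $h\bullet g = v(h'g')$ into \equref{1ab} yields
\[
r(h\bullet g) \;=\; \rho(h'g') \;=\; \rho(h')\,\bigl(v(h')\triangleright \rho(g')\bigl) \;=\; r(h)\,\bigl(h\triangleright r(g)\bigl),
\]
and substituting $h\bullet g = (h\triangleleft r(g))\,g$ on the left-hand side gives exactly the deformation equation \equref{compdef}. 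Moreover the same identification $v(h'g') = v(h')\bullet v(g')$ shows that $v: \mathbb{H}\to H_r$ is a group homomorphism, hence an isomorphism since $v$ is bijective and $1$-preserving. This supplies the required isomorphism $\mathbb{H}\cong H_r$.

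The only potential obstacle is bookkeeping: one must keep track of which matched pair each operation belongs to (the fixed $(\triangleright,\triangleleft)$ on $H$ versus the canonical pair $(\triangleright',\triangleleft')$ associated to $\mathbb{H}$), and check that compatibilities \equref{3ab}, \equref{4ab} are automatically subsumed once \equref{1ab}, \equref{2ab} have been translated as above. These are not used in constructing $r$ and the isomorphism $v$, but they serve as a consistency check: they force $(\triangleright',\triangleleft')$ to coincide with the transported matched pair $(\triangleright^r,\triangleleft)$ from \thref{deformatANM}$(2)$ under $v$, confirming that no extra information is lost. Apart from this indexing care, the proof is essentially a direct unwinding of \prref{1}.
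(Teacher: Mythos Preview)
Your proof is correct and follows essentially the same route as the paper: compose the two canonical multiplication isomorphisms to obtain an $A$-stabilizing isomorphism $\psi : A\bowtie'\mathbb{H}\to A\bowtie H$, invoke \prref{1} to get the pair $(\rho,v)$, set $r=\rho\circ v^{-1}$, and then read off the deformation condition \equref{compdef} from \equref{1ab}--\equref{2ab} while the same computation shows $v:\mathbb{H}\to H_r$ is a group isomorphism. The only addition is that you spell out the ``if'' direction via \thref{deformatANM}$(3)$, which the paper leaves implicit.
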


\begin{proof}
Let $A \bowtie H$ be the bicrossed product of the canonical
matched pair $(A, H, \triangleright, \triangleleft)$. Then the
multiplication map $m_G: A \bowtie H \to G$ is an isomorphism of
groups that stabilizes $A$. Consider $(A, {\mathbb H},
\triangleright', \triangleleft')$ to be the canonical matched pair
associated to the factorization $G = A {\mathbb H}$; hence the
multiplication map $m_G' : A \bowtie' {\mathbb H} \to G$ is also
an isomorphism of groups that stabilizes $A$. Then $\psi : =
m_G^{-1} \circ m_G': A \bowtie' {\mathbb H} \to A \bowtie H$ is a
group isomorphism that stabilizes $A$ as a composition of such
morphisms. Now by applying \prref{1} it follows that $\psi$ is
uniquely determined by a pair of maps $(\overline{r},
\overline{v})$ consisting of a unitary map $\overline{r}: {\mathbb
H} \to A$ and a unitary bijective map $\overline{v} : {\mathbb H}
\to H $ satisfying the compatibility conditions
\begin{eqnarray}
h' \triangleright' a &{=}& \overline{r}(h') \,
\bigl(\overline{v}(h')
\triangleright a \bigl) \, \overline{r} (h' \triangleleft' a)^{-1}  \eqlabel{3ab}\\
\overline{v}(h' \triangleleft' a) &{=}& \overline{v}(h')
\triangleleft a
\eqlabel{4ab}\\
\overline{r}(h' g' ) &{=}& \overline{r}(h') \,
\bigl(\overline{v}(h') \triangleright \overline{r}(g')\bigl)\eqlabel{1ab}\\
\overline{v}(h' g') &{=}& \bigl(\overline{v}(h') \triangleleft
\overline{r}(g')\bigl) \, \overline{v}(g')\eqlabel{2ab}
\end{eqnarray}
for all $h'$, $g' \in {\mathbb H}$ and $a\in A$. Moreover, $\psi :
A \bowtie' {\mathbb H} \to A \bowtie H$ is given by:
\begin{equation*}\eqlabel{psi2aab}
\psi (a, \, h') = (a \, \overline{r}(h'), \, \overline{v}(h'))
\end{equation*}
for all $a\in A$ and $h'\in {\mathbb H}$. We define
$$
r : H \to A, \quad r := \overline{r} \circ \overline{v}^{-1}
$$
and we will prove that $r$ is a deformation map of the matched
pair $(A, H, \triangleright, \triangleleft)$ and $\overline{v}:
{\mathbb H} \to H_{r}$ is an isomorphism of groups. First, notice
that $r $ is unitary as $\overline{r}$, $\overline{v}$ are both
unitary. We have to show that the compatibility condition
\equref{compdef} holds for $r$. Indeed, from \equref{1ab} and
\equref{2ab} we obtain:
\begin{eqnarray}\eqlabel{rv}
\overline{r} \circ \overline{v}^{-1} [ \, \bigl(\overline{v}(h')
\triangleleft \overline{r}(g')\bigl) \, \overline{v}(g') \, ] =
\overline{r}(h')\bigl(\overline{v}(h')
\triangleright\overline{r}(g')\bigl)
\end{eqnarray}
for all $h'$, $g' \in {\mathbb H}$. Let $h$, $g \in H$ and write
the compatibility condition \equref{rv} for $h' =
\overline{v}^{-1} (h)$ and $g' = \overline{v}^{-1} (g)$. We obtain
$$
r\Bigl( \bigl( h \triangleleft r(g)\bigl) \, g \, \Bigl) = r(h) \,
\bigl( h \triangleright r(g)\bigl)
$$
that is \equref{compdef} holds and hence $r: H \to A$ is a deformation
map. Finally, $\overline{v}: {\mathbb H} \to H_{r}$ is a bijective
map as $H = H_r$ as sets. Hence, we are left to prove that
$\overline{v}$ is also a morphism of groups. Indeed, for any $h'$,
$g' \in {\mathbb H}$ we have:
\begin{eqnarray*}
\overline{v}(h'g') \stackrel{\equref{2ab}}= \bigl(\overline{v}(h')
\triangleleft \overline{r}(g')\bigl) \, \overline{v}(g')
\stackrel{\equref{defoinmult}}= \overline{v}(h') \bullet
\overline{v}(g')
\end{eqnarray*}
where $\bullet$ is the multiplication on $H_{r}$ as defined by
\equref{defoinmult}. Hence $\overline{v}: {\mathbb H} \to H_{r}$
is an isomorphism of groups and the proof is finished.
\end{proof}

\begin{remark} \relabel{remarcatri}
Assume that in \thref{deformatANM} the deformation map $r: H \to
A$ is the trivial one or the right action $\triangleleft$ is the
trivial action of $A$ on $H$. Then $H_{r} = H$ as groups. In
general, the new group $H_{r}$ may not be isomorphic to $H$ as
groups. \exref{primexem} shows how the Klein's group $C_2 \times
C_2$ can be constructed as an $r$-deformation of the cyclic group
$C_4$, for some deformation map $r : C_4 \to S_3$. On the other
hand, there are also examples of non-trivial deformation maps,
with a non-trivial action $\triangleleft$, such that $H_{r}$ is a
group isomorphic to $H$. Such an example is provided in
\exref{treiexemp}.
\end{remark}

\begin{corollary} \colabel{proddi}
Let $A$ and $H$ be two groups, $A \ltimes H$ an arbitrary
semidirect product of $A$ and $H$. Then the factorization index
$[A\ltimes H : A]^f = 1$.

In particular, the following Krull-Schmidt type theorem for
bicrossed product holds: if $A \ltimes H \cong A \bowtie H'$
(isomorphism of groups that stabilizes $A$), then the groups $H'$
and $H$ are isomorphic, where $A \bowtie H'$ is an arbitrary
bicrossed product.
\end{corollary}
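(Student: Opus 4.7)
The plan is to exploit the trivial right action that characterizes semidirect products, and combine it with \thref{descformelorgr} to see that every $A$-complement must be isomorphic to $H$ itself.

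First I would pin down the canonical matched pair associated to the factorization $A\ltimes H = A\cdot H$. Since in $A\ltimes H$ we have $(1,h)(a,1) = (h\triangleright a,\, h) = (h\triangleright a,1)\cdot (1,h)$, the defining identity \equref{constr} forces the canonical right action to be trivial: $h\triangleleft a = h$ for all $a\in A$, $h\in H$. This is the decisive structural observation; the rest of the argument only reads off consequences.

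With $\triangleleft$ trivial, the compatibility \equref{compdef} for a deformation map $r:H\to A$ reduces to the $1$-cocycle identity $r(hg)=r(h)\bigl(h\triangleright r(g)\bigr)$, and, more importantly, the deformed multiplication \equref{defoinmult} collapses to
\[
h\bullet g \;=\; (h\triangleleft r(g))\,g \;=\; h g,
\]
so $H_r = H$ \emph{as groups}, for every deformation map $r$. Now \thref{descformelorgr} says that any $A$-complement $\mathbb{H}$ of $A\ltimes H$ is isomorphic to $H_r$ for some such $r$, hence $\mathbb{H}\cong H$. Since $H$ is itself an $A$-complement (being a factor of the bicrossed product), we conclude $\mathcal F(A,A\ltimes H)=\{[H]\}$ and $[A\ltimes H:A]^f = 1$.

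For the Krull--Schmidt statement, I would simply reinterpret an isomorphism $\varphi:A\bowtie H' \to A\ltimes H$ that stabilizes $A$: the image $\varphi(\{1\}\times H')$ is a subgroup of $A\ltimes H$ which, together with $A$, exhibits $A\ltimes H$ as an internal bicrossed product with factor isomorphic to $H'$. Thus $H'$ is (up to isomorphism) an $A$-complement of $A\ltimes H$, and by the first part $H'\cong H$. I do not foresee a serious obstacle; the only thing to be careful about is recording exactly what ``canonical matched pair of the factorization $A\ltimes H = A\cdot H$'' means, so that the reduction of \equref{compdef} and \equref{defoinmult} to the semidirect setting is unambiguous.
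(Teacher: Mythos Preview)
Your proof is correct and follows essentially the same approach as the paper: observe that the canonical matched pair of the factorization $A\ltimes H = A\cdot H$ has trivial right action $\triangleleft$, so every $r$-deformation satisfies $H_r = H$, and then invoke \thref{descformelorgr}. The paper packages the collapse $H_r = H$ into \reref{remarcatri} rather than unfolding \equref{defoinmult} explicitly as you do, and it leaves the Krull--Schmidt clause implicit (``the rest follows''), but the logical content is identical.
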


\begin{proof}
Indeed, $H \cong \{1\} \times H$ is an $A$-complement of
the semidirect product $A\ltimes H$. Moreover, the right action
$\triangleleft $ of the canonical matched pair $(A, H,
\triangleright, \triangleleft)$ constructed in \equref{constr} for
the factorization $A \ltimes H = (A \times \{1\}) ( \{1\} \times
H)$ is the trivial action. Thus, using \reref{remarcatri}, any
$r$-deformation of $H\cong \{1\} \times H$ coincides with $H$. The
rest follows from \thref{descformelorgr}.
\end{proof}

In order to provide the classification of complements we need one more definition:

\begin{definition}\delabel{amisur2}
Let $(A, H, \triangleright, \triangleleft)$ be a matched pair of
groups. Two deformation maps $r$, $R: H \to A$ are called
\emph{equivalent} and we denote this by $r \sim R$ if there exists
$\sigma: H \to H$ a permutation on the set $H$ such that $\sigma
(1_H) = 1_H$ and for all $g$, $h\in H$ we have:
\begin{equation}\eqlabel{echivamit}
\sigma \bigl( (h \triangleleft r(g)) \, g \bigl) = \bigl( \sigma
(h) \triangleleft R (\sigma (g) )  \bigl) \, \sigma (g)
\end{equation}
\end{definition}

As a conclusion of all the above results, our main theorem which
gives the classification of all $A$-complements of a group
$G$ now follows.

\begin{theorem}\thlabel{clasformelorgr}
Let $A \leq G$ be a subgroup of $G$, $H$ a given
$A$-complement of $G$ and $(A, H, \triangleright, \triangleleft)$ the
associated canonical matched pair. Then:

$(1)$ $\sim$ is an equivalence relation on ${\mathcal D}{\mathcal
M} (H, A \, | \, (\triangleright, \triangleleft) )$ and the map
$$
{\mathcal D} \, (H, A \, | \, (\triangleright, \triangleleft) ) \,
\to {\mathcal F} \, (A, G), \quad \overline{r} \mapsto H_{r}
$$
is a bijection between sets, where ${\mathcal D} \, (H, A \, | \,
(\triangleright, \triangleleft) ) := {\mathcal D}{\mathcal M} \,
(H, A \, | \, (\triangleright, \triangleleft) )/\sim$ is the
quotient set through the relation $\sim$ and $\overline{r}$ is the
equivalence class of $r$ via $\sim$.

$(2)$ The factorization index $[G : A]^f$ is computed by the
formula:
$$
[G : A]^f = | {\mathcal D} \, (H, A \, | \, (\triangleright,
\triangleleft) )|
$$
\end{theorem}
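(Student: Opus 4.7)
The plan is to reinterpret the equivalence relation $\sim$ from \deref{amisur2} as group isomorphism between the deformed complements $H_r$ and $H_R$, and then to read off the bijection using \thref{descformelorgr}.

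The key observation will be that the compatibility \equref{echivamit} is precisely the condition that a permutation $\sigma : H \to H$ with $\sigma(1_H) = 1_H$ be a morphism of groups from $H_r$ to $H_R$. Indeed, using the defining formula \equref{defoinmult}, one has $h \bullet g = (h \triangleleft r(g))\, g$ in $H_r$ and analogously in $H_R$, so \equref{echivamit} says nothing more than that $\sigma$ intertwines the two deformed multiplications. Since $H_r$ and $H_R$ share the underlying set $H$, such a $\sigma$ is nothing but an isomorphism of groups $H_r \to H_R$ (the normalizing condition $\sigma(1_H) = 1_H$ is automatic for any group homomorphism).

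With this identification in hand, part $(1)$ reduces to general nonsense. First, $\sim$ is an equivalence relation: reflexivity is witnessed by $\sigma = \Id_H$, symmetry by passing to $\sigma^{-1}$, and transitivity by composing the two witnessing isomorphisms. Next, the assignment $\overline{r} \mapsto H_r$ is well-defined and injective precisely because $r \sim R$ if and only if $H_r \cong H_R$ as abstract groups, i.e., if and only if they determine the same class in ${\mathcal F}(A, G)$. Surjectivity is exactly the content of \thref{descformelorgr}: every $A$-complement $\mathbb{H}$ of $G$ is isomorphic as a group to $H_r$ for some deformation map $r : H \to A$ of the canonical matched pair $(A, H, \triangleright, \triangleleft)$. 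Part $(2)$ then follows at once by taking cardinalities.

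The only piece of genuine computation I expect is the unpacking of \equref{echivamit} into the homomorphism condition for $\sigma : H_r \to H_R$, and this is a direct substitution using \equref{defoinmult} — hence not really an obstacle so much as a bookkeeping step. Once this single identification is made, every remaining assertion of the theorem is a formality built on \thref{descformelorgr} and elementary facts about equivalence relations.
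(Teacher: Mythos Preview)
Your proposal is correct and follows essentially the same route as the paper: both arguments pivot on the single observation that \equref{echivamit} is exactly the homomorphism condition for $\sigma : H_r \to H_R$, after which the equivalence-relation and bijection statements are immediate from \thref{descformelorgr}. Your remark that $\sigma(1_H)=1_H$ is automatic for a group homomorphism is a nice touch not made explicit in the paper, but otherwise the two proofs coincide.
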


\begin{proof}
It follows from \thref{descformelorgr} that if ${\mathbb H}$ is an
arbitrary $A$-complement of $G$, then there exists an isomorphism
of groups ${\mathbb H} \cong H_{r}$, for some deformation map $r :
H \to A$ of the matched pair $(A, H, \triangleright,
\triangleleft)$. Thus, in order to classify all $A$-complements on
$G$ we can consider only $r$-deformations of $H$, for various
deformation maps $r : H \to A$. Now let $r$, $R : H \to A$ be two
deformation maps of the matched pair $(A, H, \triangleright,
\triangleleft)$. As $H_r$ and $H_R$ coincide as sets, the groups
$H_r$ and $H_R$ are isomorphic if and only if there exists $\sigma
: H \to H$ a unitary bijective map such that $\sigma : H_r \to
H_R$ is a morphism of groups. Taking into account the definition
of the multiplication on $H_r$ given by \equref{defoinmult} it
follows that $\sigma$ is a group morphism if and only if the
compatibility condition \equref{echivamit} of \deref{amisur2}
holds, i.e. $r \sim R$. Hence, $r \sim R$ if and only if there
exists a map $\sigma$ such that $\sigma: H_r \to H_R$ is an
isomorphism of groups. Therefore $\sim$ is an equivalence relation
on ${\mathcal D}{\mathcal M} (H, A \, | \, (\triangleright,
\triangleleft) )$ and the map
$$
{\mathcal D} \, (H, A \, | \, (\triangleright, \triangleleft) )
\to {\mathcal F} \, (A, E),  \qquad \overline{r} \mapsto H_{r}
$$
is well defined and a bijection between sets, where $\overline{r}$
is the equivalence class of $r$ via the relation $\sim$. $(2)$
follows from $(1)$ and the proof is now finished.
\end{proof}

\section{Examples}\selabel{fingr}

Let $n$ be a positive integer. In this section we apply the
results obtained in \seref{bdt} to the factorization $S_{n} =
S_{n-1} C_{n}$. As a consequence, we derive a combinatorial
formula for computing the number of types of groups of order $n$
as well as an explicit description for the multiplication on any
group of order $n$. In what follows we consider the usual
presentation of the symmetric group $S_{n}$:
$$
S_{n} = \langle s_{1}, \, s_{2}, \, \ldots, \, s_{n-1} ~|~
s_{i}^{2} = 1, \, s_{i} \, s_{i+1} \, s_{i} = s_{i+1} \, s_{i} \,
s_{i+1}, \, s_{i} \, s_{j} = s_{j} \, s_{i}, \, |i-j|>1 \rangle
$$
We shall see the cyclic group $C_{n}$ as a subgroup of $S_{n}$
generated by $x := s_{1} \, s_{2} \, \ldots \, s_{n-1}$ while
$S_{n-1}$ will be seen as the subgroup of $S_{n}$ generated by
$s_{1}, \, s_{2}, \, \ldots \, s_{n-2}$. To start with, we
describe the canonical matched pair associated to the
factorization $S_{n} = S_{n-1} C_{n}$. It is enough to define the
two actions $\triangleright  : C_n \times S_{n-1} \rightarrow
S_{n-1}$ and $\triangleleft : C_n \times S_{n-1} \rightarrow C_n$
on the generators of $S_{n-1}$ and $C_{n}$ as they can be extended
to the entire group by using the compatibilities \equref{2} and
\equref{3}.

\begin{proposition}\prlabel{matchn}
The canonical matched pair $( S_{n-1}, C_n, \triangleright,
\triangleleft )$ associated to the factorization $S_{n} = S_{n-1}
C_{n}$ is given as follows:
\begin{eqnarray*}
x \triangleright s_{i} &=& \left\{
\begin{array}{rcl} s_{i+1}, & \mbox{if} & i < n-2\\ s_{n-2} \, s_{n-3} \, \ldots \, s_{1}, & \mbox{if}& i = n-2 \end{array}
\right. \\
x \triangleleft s_{i} &=& \left\{
\begin{array}{rcl} x, & \mbox{if} & i < n-2\\ x^{2}, & \mbox{if}& i = n-2 \end{array}\right.
\end{eqnarray*}
where $x := s_{1} \, s_{2} \, \ldots \, s_{n-1}$.
\end{proposition}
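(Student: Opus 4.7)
The plan is to use the defining equation \equref{constr}, which reads $x\cdot s_i = (x\triangleright s_i)(x\triangleleft s_i)$ with $x\triangleright s_i\in S_{n-1}$ and $x\triangleleft s_i\in C_n$, together with the uniqueness of the factorization $S_n=S_{n-1}C_n$. Since the matched pair compatibilities \equref{2}-\equref{3} reconstruct both actions from their values on any generating set, it suffices to compute $x\triangleright s_i$ and $x\triangleleft s_i$ for every Coxeter generator $s_i$, $1\le i\le n-2$. For each such $i$ the problem reduces to exhibiting an explicit decomposition $x\, s_i = a\, y$ with $a\in S_{n-1}$ and $y\in C_n$. I would begin by recording that $x=s_{1}s_{2}\cdots s_{n-1}$ realizes the $n$-cycle $(1,2,\ldots,n)$, namely $x(k)=k+1$ for $k\le n-1$ and $x(n)=1$; this is an immediate check, since each $s_j$ with $j>k$ fixes $k$ until $s_k$ sends $k\mapsto k+1$ and all subsequent $s_j$ with $j<k$ fix $k+1$.

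For the generic range $i\le n-3$ I would establish the word identity $x\, s_i = s_{i+1}\, x$ inside $S_n$. Since $|i-j|>1$ for every $j\in\{i+2,\ldots,n-1\}$, the letter $s_i$ commutes past the trailing block $s_{i+2}\cdots s_{n-1}$ and one obtains
\begin{equation*}
x\, s_i \;=\; s_{1}\cdots s_{i-1}\,\bigl(s_{i}\, s_{i+1}\, s_{i}\bigr)\, s_{i+2}\cdots s_{n-1}.
\end{equation*}
The braid relation $s_{i}\, s_{i+1}\, s_{i} = s_{i+1}\, s_{i}\, s_{i+1}$ now produces a leading $s_{i+1}$; since $|i+1-j|>1$ for every $j\le i-1$, this $s_{i+1}$ commutes all the way to the far left of the word, yielding $x\, s_i = s_{i+1}\, x$. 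As $s_{i+1}\in S_{n-1}$ and $x\in C_n$, uniqueness of the factorization forces $x\triangleright s_i = s_{i+1}$ and $x\triangleleft s_i = x$.

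The boundary case $i=n-2$ requires a different decomposition, because the analogous braid manipulation would produce a leading $s_{n-1}\notin S_{n-1}$. Here I would verify the identity $x\, s_{n-2} = (s_{n-2}\, s_{n-3}\cdots s_{1})\, x^{2}$ by a direct permutation computation: the left-hand side is the $(n-1)$-cycle $(1,2,\ldots,n-2,n)$ that fixes $n-1$, and the right-hand side first applies $x^{2}$ (a shift by two modulo $n$) and then the cycle $(1,n-1,n-2,\ldots,2)$ which is the permutation represented by $s_{n-2}\, s_{n-3}\cdots s_{1}$ in $S_{n-1}$; a point-by-point check on $\{1,\ldots,n\}$ shows that the two sides agree. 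Uniqueness of the factorization then delivers $x\triangleright s_{n-2} = s_{n-2}\, s_{n-3}\cdots s_{1}$ and $x\triangleleft s_{n-2} = x^{2}$. The main obstacle is purely bookkeeping: in the boundary case the fixed point $n$ of $S_{n-1}$ is swept into the cycle structure, which is exactly what forces the power of $x$ to jump from $1$ to $2$.
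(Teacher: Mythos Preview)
Your proof is correct and, for the generic case $i<n-2$, follows the paper's argument word-for-word (commute $s_i$ past the tail, apply the braid relation, commute $s_{i+1}$ to the front). For the boundary case $i=n-2$ the paper stays inside the Coxeter word calculus rather than passing to permutations: it applies the braid relation $s_{n-2}s_{n-1}s_{n-2}=s_{n-1}s_{n-2}s_{n-1}$ to obtain $x\,s_{n-2}=s_{n-1}\,x$, and then uses the telescoping identity $(s_{n-2}\cdots s_1)(s_1\cdots s_{n-1})=s_{n-1}$ to rewrite this as $(s_{n-2}\cdots s_1)\,x^2$. Your direct permutation check of the same identity is equally valid and perhaps more transparent for a reader not fluent in braid manipulations; the paper's version has the minor advantage of never leaving the word level, so it would survive verbatim in any quotient of the braid/Coxeter setup.
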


\begin{proof}
We compute the canonical matched pair by using the approach
highlighted in the proof of \prref{imp}. We start by computing the
$x s_{i}$'s, for al $i \in 1, 2, \ldots, n-2$. If $i < n-2$ we
have:
\begin{eqnarray*}
x s_{i} &=& s_{1} \ldots s_{i-1} \, s_{i} \, s_{i+1} \, s_{i+2}
\ldots
s_{n-1} \, s_{i}\\
&=& s_{1} \ldots s_{i-1} \,(s_{i} \, s_{i+1} \, s_{i})
\, s_{i+2} \ldots s_{n-1}\\
&=& s_{1} \ldots s_{i-1} \,(s_{i+1} \, s_{i} \, s_{i+1})
\, s_{i+2} \ldots s_{n-1}\\
&=& s_{i+1}\,s_{1} \ldots \, s_{n-1} = s_{i+1} x
\end{eqnarray*}
If $i = n-2$ we obtain:
\begin{eqnarray*}
x s_{n-2} &=& s_{1} \ldots s_{n-3} \, (s_{n-2} \, s_{n-1} \, s_{n-2})\\
&=& s_{1} \ldots s_{n-3} \, (s_{n-1} \, s_{n-2} \, s_{n-1})\\
&=& s_{n-1} \, s_{1} \ldots s_{n-1}\\
&=& s_{n-2} \, s_{n-3} \ldots s_{1} (s_{1} \, s_{2} \ldots
s_{n-1})^{2} = x' x^{2}
\end{eqnarray*}
where $x' := s_{n-2} \, s_{n-3} \, \ldots \, s_{1}$ and the
conclusion follows easily.
\end{proof}

By applying \thref{descformelorgr} and \thref{clasformelorgr} for
the factorization $S_n = S_{n-1} C_n$ we obtain the following
result concerning the structure and the number of types of groups
of finite order.

\begin{corollary}\colabel{cobobita}
Let $n$ be a positive integer and $(S_{n-1}, C_n, \triangleright,
\triangleleft)$ the canonical matched pair associated to the
factorization $S_n = S_{n-1} C_n$. Then:

$(1)$ Any group of order $n$ is isomorphic to an $r$-deformation
of the cyclic group $C_n$, for some deformation map $r: C_n \to
S_{n-1}$ of the canonical matched pair $(S_{n-1}, C_n,
\triangleright, \triangleleft)$. The multiplication $\bullet$ on
$(C_{n})_{r}$ is given by: $x \bullet y = \bigl( x \triangleleft
r(y) \bigl) \, y$, for all $x$, $y \in (C_{n})_{r}$, where we
denoted by juxtaposition the multiplication in the cyclic group
$C_{n}$.

$(2)$ The number of isomorphism types of all groups of order $n$
is equal to
$$
| \, {\mathcal D} (C_n, S_{n-1} \, | \, (\triangleright,
\triangleleft) ) \, |
$$
\end{corollary}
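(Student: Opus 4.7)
The plan is to combine the general theorems \thref{descformelorgr} and \thref{clasformelorgr} with the observation already made in Example~\exref{exidex}(3) that \emph{every} group of order $n$ arises as an $S_{n-1}$-complement of $S_n$ via the regular representation. With that bridge in hand, both parts of the corollary are immediate applications of the general classification machinery.

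For part $(1)$, I would start by fixing the canonical $S_{n-1}$-complement $C_n$ of $S_n$ coming from the factorization $S_n = S_{n-1} C_n$, together with its associated canonical matched pair $(S_{n-1}, C_n, \triangleright, \triangleleft)$ computed in \prref{matchn}. Then, given an arbitrary group $H$ of order $n$, I would realize $H$ as a subgroup of $S_n$ through the left regular representation $T: H \to S_n$, $T(h)(x) = hx$. Since no non-identity element of $H$ fixes any point under the regular action, in particular $n$ is not fixed by any non-trivial element of $T(H)$; viewing $S_{n-1}$ as the stabilizer of $n$ in $S_n$, this forces $T(H) \cap S_{n-1} = \{1\}$, and a cardinality count yields $S_n = S_{n-1}\, T(H)$. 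Thus $T(H) \cong H$ is an $S_{n-1}$-complement of $S_n$, and \thref{descformelorgr} applied with $A = S_{n-1}$, $G = S_n$, fixed complement $C_n$, and $\mathbb{H} = T(H)$ produces a deformation map $r : C_n \to S_{n-1}$ of the canonical matched pair such that $H \cong (C_n)_r$. The stated formula $x \bullet y = (x \triangleleft r(y))\, y$ is then just the defining multiplication \equref{defoinmult} of $(C_n)_r$ supplied by \thref{deformatANM}(1).

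For part $(2)$, I would apply \thref{clasformelorgr}(2) directly to the factorization $S_n = S_{n-1} C_n$: it gives
\[
[S_n : S_{n-1}]^f = |\,\mathcal{D}(C_n, S_{n-1} \mid (\triangleright, \triangleleft))\,|.
\]
On the other hand, part $(1)$ together with the converse direction of \thref{deformatANM}(3) (every $r$-deformation of $C_n$ is itself an $S_{n-1}$-complement of $S_n$) shows that the set $\mathcal{F}(S_{n-1}, S_n)$ of isomorphism types of $S_{n-1}$-complements of $S_n$ coincides exactly with the set of isomorphism types of groups of order $n$. Combining these two identifications yields the stated equality.

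The only real substance in the argument is the regular-representation step, and that is essentially completed already in Example~\exref{exidex}(3); no step involves any nontrivial computation, since \prref{matchn} has already identified the canonical matched pair and the heavy lifting (deformation, description, classification) has been carried out in \seref{bdt}. Hence no serious obstacle is expected; the proof is a two-line invocation of the general theorems once the regular embedding is noted.
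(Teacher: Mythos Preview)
Your proposal is correct and follows essentially the same approach as the paper: invoke Example~\exref{exidex}(3) to see that every group of order $n$ is an $S_{n-1}$-complement of $S_n$, then apply \thref{descformelorgr} for part~(1) and \thref{clasformelorgr} for part~(2). The paper's own proof is a one-sentence pointer to exactly these three ingredients, so your write-up is simply a more detailed unpacking of the same argument.
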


\begin{proof}
It follows from \thref{descformelorgr} and \thref{clasformelorgr}
taking into account that any group $H$ of order $n$ is an
$S_{n-1}$-complement of $S_n$ according to $(3)$ of \exref{exidex}.
\end{proof}

Now we provide some explicit examples in order to see how
\coref{cobobita} works.

\begin{example}\exlabel{primexem}
Consider the extension $S_3 \leq S_4$ of factorization index $2$.
Then the canonical matched pair $(S_{3}, C_4, \triangleright,
\triangleleft)$ associated to the factorization $S_4 = S_3 C_4$
from \prref{matchn} takes the following form:
$$
\begin{tabular} {c | c  c  c  c  c  c }
$\triangleright$ & $1$ & $s_{1}$ & $s_{1}\,s_{2}$ & $s_{2}\,s_{1}$ & $s_{2}$ & $s_{1}\,s_{2}\,s_{1}$\\
\hline $1$ & $1$ & $s_{1}$ & $s_{1}\,s_{2}$ & $s_{2}\,s_{1}$ & $s_{2}$ & $s_{1}\,s_{2}\,s_{1}$\\
$x$ & $1$ & $s_{2}$ & $s_{1}$ & $s_{1}\,s_{2}$ & $s_{2}\,s_{1}$ & $s_{1}\,s_{2}\,s_{1}$\\
$x^{2}$ & $1$ & $s_{2}\,s_{1}$ & $s_{2}$ & $s_{1}$ & $s_{1}\,s_{2}$ & $s_{1}\,s_{2}\,s_{1}$\\
$x^{3}$ & $1$ & $s_{1}\,s_{2}$ & $s_{2}\,s_{1}$ & $s_{2}$ &
$s_{1}$ & $s_{1}\,s_{2}\,s_{1}$
\end{tabular}
\quad
\begin{tabular} {c | c  c  c  c  c  c }
$\triangleleft$ & $1$ & $s_{1}$ & $s_{1}\,s_{2}$ & $s_{2}\,s_{1}$ & $s_{2}$ & $s_{1}\,s_{2}\,s_{1}$\\
\hline $1$ & $1$ & $1$ & $1$ & $1$ & $1$ & $1$\\
$x$ & $x$ & $x$ & $x^{2}$ & $x^{3}$ & $x^{2}$ & $x^{3}$\\
$x^{2}$ & $x^{2}$ & $x^{3}$ & $x^{3}$ & $x$ & $x$ & $x^{2}$\\
$x^{3}$ & $x^{3}$ & $x^{2}$ & $x$ & $x^{2}$ & $x^{3}$ & $x$
\end{tabular}
$$
By a straightforward computation one can prove that there are two
deformation maps for the canonical matched pair $(S_{3}, C_4,
\triangleright, \triangleleft)$: namely the trivial one $r' :
C_{4} \to S_{3}$, $r' (c) = 1$, for any $c \in C_4$ and the map
given by
$$
r: C_{4} \to S_{3}, \qquad r(1) = r(x^{2})  = 1, \qquad r(x) =
r(x^{3}) = s_{1}\,s_{2}\,s_{1}
$$
We consider the following presentation of the Klein's group:
$C_2\times C_2 = \langle a = (12)(34), \, b = (13)(24)~|~ a^{2} =
b^{2} =1, \, ab = ba \rangle$. Then we can easily prove that the
map:
$$
\varphi: C_{2} \times C_{2} \to (C_{4})_{r}, \quad \varphi(1) = 1,
\qquad \varphi(a) = x, \qquad \varphi(b) = x^{2}, \qquad
\varphi(ab) = x^{3}
$$
is an isomorphism of groups, that is $C_{2} \times C_{2} \cong
(C_{4})_{r}$.
\end{example}

\coref{cobobita} proves that any finite group of order $n$ is
isomorphic to an $r$-deformation of the cyclic group $C_n$, for
some deformation map $r: C_n \to S_{n-1}$ of the canonical matched
pair associated to the factorization $S_n = S_{n-1} C_n$. The next
example shows how the symmetric group $S_3$ appears as an
$r$-deformation of the cyclic group $C_6$ arising from a given
matched pair $(C_{3}, C_{6}, \triangleright, \triangleleft)$.

\begin{example} \exlabel{doiexemp}
Let $C_{3} = \langle a ~|~ a^{3} = 1 \rangle$ and $C_{6} = \langle
b ~|~ b^{6} = 1 \rangle$ be the cyclic groups of order $3$
respectively $6$. As a special case of \cite[Proposition
4.2]{CENT} we have a matched pair of groups $(C_{3}, C_{6},
\triangleright, \triangleleft)$, where the actions
$(\triangleright, \triangleleft)$ on generators are defined by:
$$
b \triangleright a := a^2 \qquad b \triangleleft a := b^3
$$
By a rather long but straightforward computation it can be seen
that the map:
$$
r: C_{6} \to C_{3}, \qquad r(1) = r(b^{3}) = 1, \qquad r(b) =
r(b^{4}) = a^{2}, \qquad r(b^{2}) = r(b^{5}) = a
$$
is a deformation map of the matched pair $(C_{3}, C_{6},
\triangleright, \triangleleft)$ and $\varphi : S_{3} \to
(C_{6})_{r}$ given by:
$$
\varphi(1) = 1, \, \varphi(s_{1}) = b, \, \varphi(s_{1}\,s_{2}) =
b^{2}, \varphi(s_{2}\,s_{1}) = b^{4}, \, \varphi(s_{2}) = b^{5},
\, \varphi(s_{1}\,s_{2}\,s_{1}) = b^{3}
$$
is an isomorphism of groups. Hence $S_{3}$ is an $r$-deformation
of the cyclic group $C_{6}$.
\end{example}

Our last example provides a non-trivial deformation map $r : H \to
A$ such that $H_r \cong H$.

\begin{example} \exlabel{treiexemp}
Let $(C_{3}, C_{6}, \triangleright, \triangleleft)$ be the matched
pair of \exref{doiexemp}. Then the map
$$
R: C_{6} \to C_{3}, \quad R(1) = R (b^{2}) = R (b^{4}) = 1, \qquad
R (b) = R (b^{3}) = R (b^{5}) = a
$$
is also a deformation map of $(C_{3}, C_{6}, \triangleright,
\triangleleft)$. Then, one can easily check that $(C_{6})_{R}$ is
a group isomorphic to $C_{6}$.
\end{example}

\section*{Acknowledgment}
The authors are indebted to the referee for his valuable
suggestions as well as to Marian Deaconescu, Drago\c{s}
Fra\c{t}ila, Michael Giudici and Cheryl Praeger for their comments
on a preliminary version of this paper.

\end{document}